\renewcommand*{\backref}[1]{}
\renewcommand*{\backrefalt}[4]{%
    \ifcase #1 (Not cited.)%
    \or        (p.\,#2)%
    \else      (pp.\,#2)%
    \fi}
\begin{document}

\newtheorem{theorem}{Theorem}
\newtheorem{lemma}[theorem]{Lemma}
\newtheorem{claim}[theorem]{Claim}
\newtheorem{cor}[theorem]{Corollary}
\newtheorem{prop}[theorem]{Proposition}
\newtheorem{definition}{Definition}
\newtheorem{quest}[theorem]{Question}
\newtheorem{remark}[theorem]{Remark}
\newtheorem{conj}[theorem]{Conjecture}
\newcommand{\hh}{{{\mathrm h}}}

\numberwithin{equation}{section}
\numberwithin{theorem}{section}
% \numberwithin{remark}{section}
\numberwithin{table}{section}

\def\sssum{\mathop{\sum\!\sum\!\sum}}
\def\ssum{\mathop{\sum\ldots \sum}}
\def\iint{\mathop{\int\ldots \int}}

\def\squareforqed{\hbox{\rlap{$\sqcap$}$\sqcup$}}
\def\qed{\ifmmode\squareforqed\else{\unskip\nobreak\hfil
\penalty50\hskip1em\null\nobreak\hfil\squareforqed
\parfillskip=0pt\finalhyphendemerits=0\endgraf}\fi}%%

%  use the AMS-Euler Fraktur fonts
%%%%%%%%%%%%%%%%%%%%%%%%%%%%%%%%%%
\newfont{\teneufm}{eufm10}
\newfont{\seveneufm}{eufm7}
\newfont{\fiveeufm}{eufm5}
%%%%%%%%%%%%%%%%%%%%%%%%%%%%%%%%%
%
%  allow automatic size selection in math mode
%
%%%%%%%%%%%%%%%%%%%%%%%%%%%%%%%%%
\newfam\eufmfam
     \textfont\eufmfam=\teneufm
\scriptfont\eufmfam=\seveneufm
     \scriptscriptfont\eufmfam=\fiveeufm
%%%%%%%%%%%%%%%%%%%%%%%%%%%%%%%%%
%
%  \frak works on a single symbol at a time...
%
\def\frak#1{{\fam\eufmfam\relax#1}}

\def\t{\widetilde}

\def\fF{\EuScript{F}}
\def\tB{\widetilde{B}}

\newcommand{\bflambda}{{\boldsymbol{\lambda}}}
\newcommand{\bfmu}{{\boldsymbol{\mu}}}
\newcommand{\bfxi}{{\boldsymbol{\xi}}}
\newcommand{\bfrho}{{\boldsymbol{\rho}}}

\def\fK{Frak K}
\def\fT{Frak{T}}

\def\fA{{Frak A}}
\def\fB{{Frak B}}
\def\fC{\mathfrak{C}}

\def \balpha{\bm{\alpha}}
\def \bbeta{\bm{\beta}}
\def \bgamma{\bm{\gamma}}
\def \blambda{\bm{\lambda}}
\def \bchi{\bm{\chi}}
\def \bphi{\bm{\varphi}}
\def \bpsi{\bm{\psi}}

\def\eqref#1{(\ref{#1})}

\def\vec#1{\mathbf{#1}}

%\def\squareforqed{\hbox{\rlap{$\sqcap$}$\sqcup$}}
%\def\qed{\ifmmode\squareforqed\else{\unskip\nobreak\hfil
%\penalty50\hskip1em\null\nobreak\hfil\squareforqed
%\parfillskip=0pt\finalhyphendemerits=0\endgraf}\fi}

%%%%%%%%%%%%%%%%%%%%%%%%%
% Alphabet calligraphie %
%%%%%%%%%%%%%%%%%%%%%%%%%
\def\cA{{\mathcal A}}
\def\cB{{\mathcal B}}
\def\cC{{\mathcal C}}
\def\cD{{\mathcal D}}
\def\cE{{\mathcal E}}
\def\cF{{\mathcal F}}
\def\cG{{\mathcal G}}
\def\cH{{\mathcal H}}
\def\cI{{\mathcal I}}
\def\cJ{{\mathcal J}}
\def\cK{{\mathcal K}}
\def\cL{{\mathcal L}}
\def\cM{{\mathcal M}}
\def\cN{{\mathcal N}}
\def\cO{{\mathcal O}}
\def\cP{{\mathcal P}}
\def\cQ{{\mathcal Q}}
\def\cR{{\mathcal R}}
\def\cS{{\mathcal S}}
\def\cT{{\mathcal T}}
\def\cU{{\mathcal U}}
\def\cV{{\mathcal V}}
\def\cW{{\mathcal W}}
\def\cX{{\mathcal X}}
\def\cY{{\mathcal Y}}
\def\cZ{{\mathcal Z}}
\newcommand{\rmod}[1]{\: \mbox{mod} \: #1}

\def\cg{{\mathcal g}}

\def\vr{\mathbf r}

\def\e{{\mathbf{\,e}}}
\def\ep{{\mathbf{\,e}}_p}
\def\em{{\mathbf{\,e}}_m}

\def\Tr{{\mathrm{Tr}}}
\def\Nm{{\mathrm{Nm}}}

 \def\SS{{\mathbf{S}}}

\def\lcm{{\mathrm{lcm}}}
\def\ord{{\mathrm{ord}}}

\def\({\left(}
\def\){\right)}
\def\fl#1{\left\lfloor#1\right\rfloor}
\def\rf#1{\left\lceil#1\right\rceil}

\def\mand{\qquad \mbox{and} \qquad}

\newcommand{\commF}[1]{\marginpar{%
\begin{color}{blue}
\vskip-\baselineskip %raise the marginpar a bit
\raggedright\footnotesize
\itshape\hrule \smallskip F: #1\par\smallskip\hrule\end{color}}}

\newcommand{\commI}[1]{\marginpar{%
\begin{color}{magenta}
\vskip-\baselineskip %raise the marginpar a bit
\raggedright\footnotesize
\itshape\hrule \smallskip I: #1\par\smallskip\hrule\end{color}}}

%%%%%%%%%%%%%%%%%%%%%%%%%%%%%%%%%%%%%%%%%%%%%%%%%%%%%%%%
%%%%%%%%%%%%%%%%%%%%%%%%%%%%%%%%%%%%%%%%%%%%%%%%%%%%%%%%
%%%%%%%%%%%%%%%%%%%%%%%%%%%%%%%%%%%%%%%%%%%%%%%%%%%%%%%%
%%%%%%%%%%%%%%%%%%%%%%%%%%%%%%%%%%%%%%%%%%%%%%%%%%%%%%%%

%%%%%%%  END OF STANDARD STUFF %%%%%%%%%

%%%%%%%%%%%%%%%%%%%%%%%%%%%%%%%%%%%%%%%%%%%%%%%%%%%%%%%%
%%%%%%%%%%%%%%%%%%%%%%%%%%%%%%%%%%%%%%%%%%%%%%%%%%%%%%%%
%%%%%%%%%%%%%%%%%%%%%%%%%%%%%%%%%%%%%%%%%%%%%%%%%%%%%%%%
%%%%%%%%%%%%%%%%%%%%%%%%%%%%%%%%%%%%%%%%%%%%%%%%%%%%%%%
%%%%%%%%%%%
%%% Spell

\hyphenation{re-pub-lished}

\mathsurround=1pt

\def\bfdefault{b}
\overfullrule=5pt

\def \D{{\mathbb D}}
\def \T{{\mathbb T}}
\def \F{{\mathbb F}}
\def \K{{\mathbb K}}
\def \N{{\mathbb N}}
\def \Z{{\mathbb Z}}
\def \Q{{\mathbb Q}}
\def \R{{\mathbb R}}
\def \C{{\mathbb C}}
\def\Fp{\F_p}
\def \fp{\Fp^*}

\def\Kmn{\cK_p(m,n)}
\def\psmn{\psi_p(m,n)}
\def\SI{\cS_p(\cI)}
\def\SIJ{\cS_p(\cI,\cJ)}
\def\SAIJ{\cS_p(\cA;\cI,\cJ)}
\def\SABIJ{\cS_p(\cA,\cB;\cI,\cJ)}
\def \xbar{\overline x_p}

%
%\title[Bilinear sums of Kloosterman sums]{Bilinear sums of Kloosterman sums modulo a prime power}

\title[Level curves of rational functions]{Level curves of rational functions and unimodular points on rational curves}

\author[F. Pakovich]{Fedor Pakovich}
\address{Department of Mathematics, 
Ben Gurion University of the Negev, P.O.B. 653, Beer Sheva,  8410501, Israel}
\email{pakovich@math.bgu.ac.il}

 \author[I. E. Shparlinski] {Igor E. Shparlinski}

\address{Department of Pure Mathematics, University of New South Wales,
Sydney, NSW 2052, Australia}
\email{igor.shparlinski@unsw.edu.au}

\begin{abstract}
We obtain an improvement and broad generalisation of a result 
of N.~Ailon and Z.~Rudnick (2004) on common zeros of shifted powers of 
polynomials. Our approach is based on reducing this question to a  more general 
question of counting intersections of level curves of complex functions. We treat this question 
via classical tools of complex analysis  and algebraic geometry.
 \end{abstract}

\keywords{Unimodular points, Ailon and Rudnick theorem, Blaschke product}
\subjclass[2010]{11D61,  12D10,  30C15,  30J10}

\maketitle

\section{Introduction}

Recall that  Ailon and Rudnick~\cite[Theorem~1]{AR} have shown that for any multiplicatively independent polynomials  $P_1(z)$ and $P_2(z)$
with complex coefficients there exists a polynomial  $F(z) \in \C[z]$ such that 
for any positive integer $k$  the  greatest common divisor of $ P_1(z)^k-1$ and $P_2(z)^k-1$ divides $F$, that is   
$$
\gcd\(P_1(z)^k-1, P_2(z)^k-1\) \mid F(z), \quad k=1,2, \ldots.
$$

 Since it is easy to see that for a non-trivial polynomial
$P(z) \in \C[z]$ the  multiplicity of 
any factor of $P(z)^k-1$ does not exceed $\deg P$, 
the theorem of  Ailon and Rudnick is equivalent to
the following statement: 
 if $P_1$ and $P_2$ are complex polynomials, then   
\begin{equation}
 \label{eq:Zero AR} 
\#\bigcup_{k=1}^\infty \{z \in \C~:~P_1(z)^k = P_2(z)^k =1\} \le C(P_1,P_2),
\end{equation}
for some   constant $C(P_1,P_2)$ that depends only on $P_1$ and $P_2$, unless for some non-zero integers $m_1$ and $m_2$  we have
%with $(m_1,m_2) \ne (0,0$ we have
\begin{equation}
 \label{eq:P} 
P_1^{m_1}(z)P_2^{m_2}(z)=1
\end{equation}
identically.
Different versions and generalization Ailon-Rudnick result have been studied in many recent papers (see, for example,~\cite{GhHsTu2,HsTu,Ost,PaWa} and the references therein).

The method of Ailon and Rudnick~\cite{AR} relies 
on a result conjectured by
 Lang and proved by Ihara, Serre and Tate,
which states that the intersection of an irreducible curve $\cC$ in $\C^*\times \C^*$
with the roots of unity $\mu_{\infty}\times \mu_{\infty}$ is finite, unless $\cC$ is of the form
$X^nY^m - \eta= 0$ or $X^m - \eta Y^n = 0$, where $\eta\in  \mu_{\infty}$, that is unless $\cC$
is a translate by a torsion point of an algebraic subgroup  of  $\C^*\times \C^*$
 (see \cite{Lang}, \cite{Lang2}, and also \cite{BeSm}).

Corvaja, Masser, and Zannier in the paper of ~\cite{CMZ} 
ask about a possible extension of the Lang 
statement~\cite{Lang}, where instead of the intersection of $\cC$ with $\mu_{\infty}\times \mu_{\infty}$ the intersection with $S^1\times S^1$  is considered (here 
$S^1$ is treated as the topological closure of 
torsion points). In particular, 
they proved that the system
\begin{equation}
 \label{eq:Z} 
\left| z \right | =\left| P(z)\right | =1,
\end{equation} where $P(z)$ is a polynomial, has finitely many  solutions, unless $P(z)$ is a monomial. 
They also remarked that if $P(z)$ is allowed to be a rational function, then the system~\eqref{eq:Z} might have infinitely many solutions for non-monomial $P(z)$. 

 In this paper we consider  
the system of equations 
for the level curves 
\begin{equation}
 \label{eq:Unimod} 
\left| P_1(z)\right | =\left| P_2(z)\right | =1,
\end{equation}
where  $P_1(z)$ and $P_2(z)$ arbitrary  rational functions, 
generalising the systems~\eqref{eq:Zero AR} and ~\eqref{eq:Z}. 
Using classical tools of complex analysis and algebraic geometry,
 we describe $P_1$ and $P_2$ for which this system has infinitely many solutions and 
provide bounds for the number of  solutions in the other cases. 
Thus, our results can be considered as extensions of the result of Ailon and Rudnick~\cite{AR}
as well as of the Lang statement~\cite{Lang} in the particular case concerning of curves of genus zero.

\section{Results}

Recall that a {\it finite Blaschke product} is a rational function $B(z) \in \C(z)$ of the form 
$$B(z)=\zeta\prod_{i=1}^n\left(\frac{z-a_i}{1-\bar{a_i}z}\right)^{m_i},
$$ where $a_i$ are complex numbers in the open unit disc 
$$\D   = \{z \in \C~:~|z| <1\},$$
the exponents $m_i$, $i=1, \ldots, n$, are positive integers, and $\left| \zeta\right |=1$. 
A rational function 
$Q(z)$ of the form $Q(z) = B_1(z)/B_2(z)$, where 
$B_1$ and $B_2$ are finite Blaschke products,  is called a {\it quotient of finite Blaschke products}.

In the above notation,  our first result is the following.

\begin{theorem}
 \label{thm:LC-GenZero}
 Let $\cC: F(x,y)=0$, where $F(x,y) \in \C[x,y]$, 
be an irreducible algebraic curve of genus zero and of degree $d= \deg F$. Then $\cC$ has at most $d^2$ unimodular points, unless it can be parametrised 
by some quotients of finite Blaschke products 
 $x = Q_1(z)$ and $y = Q_2(z)$.
\end{theorem}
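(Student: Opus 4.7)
The plan is to translate the question into the study of antiholomorphic involutions on $\mathbb{P}^1$. The central observation is that the antiholomorphic involution
$$\sigma\colon(x,y)\mapsto(1/\bar x,\,1/\bar y)$$
of $(\C^*)^2$ has the unimodular torus $S^1\times S^1$ as its fixed set; consequently every unimodular point of $\cC$ lies in $\cC\cap\cC^*$, where $\cC^*:=\sigma(\cC)$ is the irreducible algebraic curve defined (after clearing denominators) by $\bar F(1/x,1/y)=0$.

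If $\cC\neq\cC^*$, then $\cC\cap\cC^*$ is a finite set, and I would bound it by a Bezout-type argument. Since $\cC$ and $\cC^*$ share the same bidegree $(\deg_xF,\deg_yF)$ in $\mathbb{P}^1\times\mathbb{P}^1$, an initial Bezout count in this compactification, refined by discarding the spurious intersections occurring along the coordinate axes or at infinity (none of which can be unimodular), gives the required bound $|\cC\cap\cC^*|\leq d^2$.

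If on the other hand $\cC=\cC^*$, then $\sigma$ restricts to an antiholomorphic involution of $\cC$. The genus-zero hypothesis allows me to lift $\sigma$ through the normalization $\pi\colon\mathbb{P}^1\to\cC$ to an antiholomorphic involution $\tilde\sigma$ of $\mathbb{P}^1$. By the classification of real structures on $\mathbb{P}^1$, $\tilde\sigma$ is Möbius-conjugate to either $z\mapsto 1/\bar z$ (fixed set the unit circle) or $z\mapsto -1/\bar z$ (fixed-point-free). In the fixed-point-free sub-case, fixed points of $\sigma$ on $\cC$ can arise only from singular points of $\cC$ whose $\pi$-preimages form a $\tilde\sigma$-interchanged pair; since an irreducible rational plane curve of degree $d$ has at most $\binom{d-1}{2}$ singular points, the bound $d^2$ is then immediate.

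In the remaining sub-case, I choose the parameter $z$ so that $\tilde\sigma(z)=1/\bar z$. Writing $\pi(z)=(R_1(z),R_2(z))$, the compatibility $\sigma\circ\pi=\pi\circ\tilde\sigma$ becomes $R_i(1/\bar z)=1/\overline{R_i(z)}$. For $|z|=1$ this reads $|R_i(z)|=1$, so each $R_i$ sends the unit circle into itself, and the classical characterization then forces each $R_i$ to be a quotient of finite Blaschke products, which is the exceptional alternative of the theorem. The principal obstacle I foresee is extracting the sharp constant $d^2$ in the Bezout step of the first case, since the naive count in $\mathbb{P}^1\times\mathbb{P}^1$ produces $2(\deg_xF)(\deg_yF)$; a careful tally of the intersections lying on the torus boundary is needed to bring this down to $d^2$.
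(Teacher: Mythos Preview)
Your approach is essentially the paper's strategy, carried out directly in the unimodular picture rather than after a change of coordinates. The paper first applies the Cayley transform $T(z)=i(1+z)/(1-z)$ simultaneously to $x$ and $y$, which sends $S^1\times S^1$ to $\R\times\R$ and conjugates your involution $\sigma:(x,y)\mapsto(1/\bar x,1/\bar y)$ into ordinary complex conjugation. After this transform your case ``$\cC\ne\cC^*$'' becomes ``$F$ and $\overline F$ are coprime'', and B\'ezout in $\mathbb{P}^2$ for two curves of the \emph{same} degree $d$ gives the bound $d^2$ with no further bookkeeping; your case ``$\cC=\cC^*$'' becomes ``$\cC$ is defined over $\R$'', and the paper then quotes the standard fact that a genus-zero curve over $\R$ with a smooth $\R$-point admits a parametrisation over $\R$. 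Your classification of antiholomorphic involutions of $\mathbb{P}^1$ into $z\mapsto1/\bar z$ and $z\mapsto-1/\bar z$ is exactly this last fact in different clothing (the two real forms of $\mathbb{P}^1$), and your handling of the fixed-point-free sub-case via the singular-point bound is the analogue of the paper's observation that more than $d^2$ real points forces a \emph{smooth} real point.

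The payoff of the paper's detour through the real picture is precisely the obstacle you flag: in the real setting the conjugate curve $\overline\cC$ has the same $\mathbb{P}^2$-degree as $\cC$, so $d^2$ drops out immediately, whereas in your direct setting $\cC^*$ need not have degree $d$, and the $\mathbb{P}^1\times\mathbb{P}^1$ count $2d_xd_y$ can genuinely exceed $d^2$. (It is worth noting, however, that $T\times T$ is an automorphism of $\mathbb{P}^1\times\mathbb{P}^1$ but not of $\mathbb{P}^2$, so it does not in general preserve the total degree; the paper's one-line ``equivalence'' between the unimodular and the real statement with the \emph{same} constant $d^2$ therefore conceals the very issue you have identified.)
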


Our second result is the following generalisation of the bound~\eqref{eq:Zero AR}.

\begin{theorem}
 \label{thm:cor}
Let $P_1(z)$ and $P_2(z)$ be complex rational functions of degrees $n_1$ and $n_2$. Then 
\begin{equation}
 \label{eq:Zero PS} 
\begin{split}
\# \{z \in \C~:~\left| P_1(z)\right | =\left| P_2(z)\right | =1\} \le (n_1+n_2)^2,
\end{split}
\end{equation}
unless
\begin{equation}
 \label{eq:PBW} 
 P_1=B_1\circ W \mand P_2=B_2\circ W
\end{equation}
 for some  quotients of finite Blaschke products $B_1$ and $B_2$ and 
rational function $W$. 
\end{theorem}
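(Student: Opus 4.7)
The plan is to reduce Theorem~\ref{thm:cor} to Theorem~\ref{thm:LC-GenZero} by analysing the image curve of the map $\Phi\colon z \mapsto (P_1(z), P_2(z))$ from $\mathbb{P}^1$ into $\mathbb{P}^1 \times \mathbb{P}^1$. If both $P_1, P_2$ are constant the claim is trivial. Otherwise the image of $\Phi$ is an irreducible algebraic curve $\cC\colon F(x,y) = 0$ of genus zero. By L\"uroth's theorem applied to $\C(P_1, P_2) \subseteq \C(z)$, there exists a rational function $W$ with $\C(P_1, P_2) = \C(W)$; then $P_1 = R_1 \circ W$ and $P_2 = R_2 \circ W$ for some rational $R_1, R_2$, and the map $w \mapsto (R_1(w), R_2(w))$ is a birational parametrisation of $\cC$. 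Writing $m = \deg W$ and $d_i = \deg R_i$, we have $n_i = m d_i$, and the parametrisation yields $\deg_y F = d_1$ and $\deg_x F = d_2$, whence $\deg F \le d_1 + d_2 = (n_1+n_2)/m$.

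Each $z \in \C$ with $|P_1(z)| = |P_2(z)| = 1$ maps under $\Phi$ to a unimodular point of $\cC$, and since $\deg \Phi = m$, each unimodular point of $\cC$ has at most $m$ preimages. Applying Theorem~\ref{thm:LC-GenZero} to $\cC$ yields two possibilities. Either $\cC$ has at most $(\deg F)^2$ unimodular points, in which case
\[
\#\{z \in \C : |P_1(z)| = |P_2(z)| = 1\} \le m (\deg F)^2 \le m \cdot \left(\frac{n_1+n_2}{m}\right)^2 = \frac{(n_1+n_2)^2}{m} \le (n_1+n_2)^2,
\]
establishing~\eqref{eq:Zero PS}; or $\cC$ admits a parametrisation $(B_1(t), B_2(t))$ by quotients of finite Blaschke products. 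In the latter case, since $(B_1, B_2)$ and $(R_1, R_2)$ are both (birational) parametrisations of the same rational curve, they differ by a M\"obius transformation $\mu$, so $B_i = R_i \circ \mu$ and $R_i = B_i \circ \mu^{-1}$. Setting $W' = \mu^{-1} \circ W$ then gives $P_i = R_i \circ W = B_i \circ W'$, which is exactly the decomposition~\eqref{eq:PBW}.

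The most delicate point in the argument is the exceptional case: one must verify that any parametrisation of the irreducible genus-zero curve $\cC$ by quotients of finite Blaschke products is automatically birational (or can be replaced by a birational one), so that the reparametrisation relating it to $(R_1, R_2)$ is a M\"obius transformation rather than a higher-degree rational map. Otherwise, $\mu^{-1} \circ W$ would not be defined as a rational function. For irreducible rational curves this should follow from the uniqueness of the normalisation, together with a rigidity property of Blaschke quotients under composition; formalising this compatibility is the main technical step I expect to be the principal obstacle.
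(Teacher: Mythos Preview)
Your approach is essentially the paper's: factor $P_1,P_2$ through a proper parametrisation via L\"uroth, bound the degree of the image curve, and apply Theorem~\ref{thm:LC-GenZero}. The obstacle you flag in the final paragraph is not real: the parametrisation produced in the proof of Theorem~\ref{thm:LC-GenZero} (equivalently Theorem~\ref{lem:RealPoint Curve}, via~\cite[Theorem~7.6]{swd}) is already proper---and in any case L\"uroth over $\R$ lets one replace any real parametrisation by a proper real one---so $(B_1,B_2)$ is birational and your M\"obius reparametrisation goes through verbatim.
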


In order to see that Theorem~\ref{thm:cor} implies~\eqref{eq:Zero AR} it is enough to observe that if
a quotient of finite Blaschke products is a polynomial, then this polynomial is necessary a power. Thus, ~\eqref{eq:PBW} reduces to 
$$P_1=W^{m_1} \mand   P_2=W^{m_2},$$
implying~\eqref{eq:P}.

Notice that since any 
quotient of finite Blaschke products  maps 
the unit circle
$$
\T = \{z \in \C~:~|z| =1\}
$$
to itself, if $P_1$ and $P_2$ satisfy~\eqref{eq:PBW}, then the level curves  $\left| P_1(z)\right | =1$ and $\left| P_2(z)\right | =1$ have a common component 
$W^{-1}\{\T\}$, so  a bound like~\eqref{eq:Zero PS}, 
or any other finiteness result, 
cannot exist. In particular, this happens if $P_1$ is a unimodular constant and 
$P_2$ is an arbitrary rational function (in this case~\eqref{eq:PBW} 
holds for $B_1=P_1$, $B_2=z$, and $W=P_2$).

\section{Proofs}
\label{sec:curve}

Since the inverse Cayley transform
$$z \mapsto T(z) = i\frac{1+z}{1-z}$$ 
 maps
$\D$ to the upper half-plane, and  the unit circle $\T$ maps under $T$ to the extended real line, a rational function   
$Q$ is a quotient of a finite Blaschke product if and only if the rational function 
$$R=T\circ Q\circ T^{-1}$$ maps $\R\cup \infty$ to $\R\cup \infty$. 
In turn, the last condition is equivalent 
 to the condition that $R$ has real coefficients (since 
 $R(z)$ and $\overline{R}(z)$ 
 coincide for infinitely many values of $z$).

 Thus, Theorem~\ref{thm:LC-GenZero} is equivalent to the following statement.

\begin{theorem}
 \label{lem:RealPoint Curve}
If an irreducible algebraic curve 
 $\cC: F(x,y)=0$ of genus zero and degree $d$ has more than $d^2$ real points, 
then $\cC$ can be para\-met\-rised by rational functions with real coefficients.  
\end{theorem}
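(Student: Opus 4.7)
The plan is to analyze the action of complex conjugation on the normalization of $\cC$: it lifts to an anti-holomorphic involution of $\mathbb{P}^1(\C)$, and the hypothesis of many real points will force this involution to be conjugate to standard conjugation, yielding a parametrization over $\R$. A preliminary step is to show that $F$ itself may be taken to have real coefficients. Let $\bar F \in \C[x,y]$ denote the polynomial obtained by conjugating the coefficients of $F$. Every real point of $\cC$ satisfies $\bar F(x,y)=\overline{F(x,y)}=0$, so if $\{F=0\}$ and $\{\bar F=0\}$ were distinct irreducible curves of degree $d$, B\'ezout's theorem would bound their intersection by $d^2$, contradicting the assumption. Hence $\bar F=\lambda F$ with $|\lambda|=1$, and after multiplying $F$ by a suitable unimodular constant we may assume $F\in \R[x,y]$, so $\cC$ is defined over $\R$.

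Next I would set up the lifted involution. Since $\cC$ has genus zero, its normalization is $\mathbb{P}^1(\C)$, giving a birational parametrization $\phi=(\varphi,\psi):\mathbb{P}^1(\C)\to \cC$. Complex conjugation $\sigma:(x,y)\mapsto (\bar x,\bar y)$ is an anti-holomorphic involution of $\cC(\C)$, and by the universal property of the normalization it lifts uniquely to an anti-holomorphic involution $\tau$ of $\mathbb{P}^1(\C)$ satisfying $\phi\circ \tau=\sigma\circ \phi$. The key observation is that if $p=\phi(t)$ is a smooth real point of $\cC$, then $\phi$ is injective at $t$, so $\phi(\tau(t))=\sigma(p)=p$ forces $\tau(t)=t$; thus smooth real points of $\cC$ correspond to fixed points of $\tau$.

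Finally I would invoke the classification of anti-holomorphic involutions of $\mathbb{P}^1(\C)$: up to M\"obius conjugation, $\tau$ is either the standard conjugation $s\mapsto \bar s$, with fixed set $\R\cup\{\infty\}$, or the antipodal map $s\mapsto -1/\bar s$, which has no fixed points. In the fixed-point-free case every real point of $\cC$ would be singular; but for an irreducible plane curve of degree $d$ and geometric genus zero, the standard genus formula yields $\sum_p \delta_p=(d-1)(d-2)/2$, so the number of singular points is at most $(d-1)(d-2)/2<d^2$, contradicting the hypothesis. Hence $\tau$ has fixed points; choosing a M\"obius map $\mu$ with $\mu \tau \mu^{-1}(s)=\bar s$ and replacing $\phi$ by $\phi\circ \mu^{-1}$ produces a parametrization satisfying $\phi(\bar s)=\sigma(\phi(s))$, whose components therefore lie in $\R(s)$. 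The hardest point I anticipate is ruling out the fixed-point-free real form of $\mathbb{P}^1$; this is precisely where the quantitative bound $d^2$ in the hypothesis becomes essential, and it also requires care that the singular-point count is performed on the projective closure so that no real affine points escape.
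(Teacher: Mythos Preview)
Your argument is correct and follows the same three-step arc as the paper: use B\'ezout on $\cC\cap\overline{\cC}$ to force $F\in\R[x,y]$ (up to a unimodular scalar), bound the singular points by $(d-1)(d-2)/2$ to guarantee a smooth real point, and then conclude that the genus-zero curve admits a parametrisation over $\R$. The one genuine difference is in the final step. The paper simply invokes the black-box fact (citing Sendra--Winkler--P\'erez-D\'iaz) that a genus-zero curve defined over $\R$ with a smooth $\R$-point is parametrisable over $\R$; you instead prove this in situ by lifting complex conjugation to an anti-holomorphic involution of the normalisation $\mathbb{P}^1(\C)$ and invoking the dichotomy between the standard and antipodal involutions, ruling out the latter via the smooth real point. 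Your route is more self-contained and makes explicit \emph{why} the smooth real point is needed (it kills the non-trivial real form of $\mathbb{P}^1$); the paper's is shorter but relies on an external reference.
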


\begin{proof}Observe that real points of $\cC$ belong to the intersection of the 
curve  $\cC$ and the curve $\overline{\cC}:\overline{F}(x,y)=0$. Therefore, it follows from the B\'ezout theorem that whenever $\cC$ has more than $d^2$ real points 
there exists 
$c\in \C$ such that $\overline{F}=c F$.  Such $c$ must satisfy $c\overline{c}=1$, implying that we can find a complex number 
$\lambda$ such that $\lambda^2=c$ and $\lambda\overline{\lambda} =1$. Since
$$\overline{\lambda F}=\overline{\lambda}\lambda^2 F=\lambda F,$$ the polynomial $\lambda F$ has real coefficients, and hence $\cC$ can be defined over $\R$. 

  Since the maximal number of singular points  of a plane curve of degree $d$ does not exceed $$\frac{(d-1)(d-2)}{2}$$ 
(see, for example,~\cite[Page~49]{fi}) and $\cC$ has more  than $d^2$ real points,  $\cC$ has a non-singular real point. 
Finally, an algebraic curve $\cC$ of genus zero defined over $\R$ admits a parametrisation by rational functions defined over $\R$ whenever 
$\cC$ has at least one non-singular $\R$-point (see, 
for example,~\cite[Theorem~7.6]{swd}). 
\end{proof}

In order to prove Theorem~\ref{thm:cor},
recall that  if a parametrisation $x=P_1(z)$,  $y=P_2(z)$ of an algebraic curve $C$ of genus zero is proper, that is, if 
$$
 \C(z)=\C(P_1(z),P_2(z)),
$$
then 
$$\deg P_1=\deg_yF \mand \deg P_2=\deg_x F,$$
(see, for example,~\cite[Theorem~4.21]{swd}).

Let now $P_1$ and $P_2$ be rational functions of degrees $n_1$ and $n_2$. Then the L\"uroth theorem implies that 
there exist a rational function $W$ and rational functions $Q_1$ and $Q_2$ such that the equalities~\eqref{eq:PBW} hold, and 
$$x=Q_1(z), \qquad y=Q_2(z),$$ 
is a proper parametrisation of an algebraic curve $C$ of degree 
at most $n_1+n_2$. 
Therefore, if~\eqref{eq:Zero PS} does not hold, then $Q_1$ and $Q_2$ are
quotients of finite Blaschke products by
Theorem~\ref{thm:LC-GenZero}.

%\section{Concluding comments}

\begin{remark}{\rm
We observe that  the  above argument 
%%  of Section~\ref{sec:curve}
provides a simple geometric criterion for a curve $\cC:G(x,y)=0$ to have infinitely many unimodular points. Namely, considering instead of the curve $\cC$ a curve $\widehat{\cC}: \widehat G(x,y)=0$, where 
$$\widehat G(x,y)=
G\(T(x),T(y)\),$$
 we reduce the question to the question about real points of $\widehat\cC$. On the other hand, it is easy to see that an algebraic curve  has infinitely many real points if and only if it is defined over $\R$ and has at least one simple $\R$-point.  
Indeed, the necessity has been  proved above. In the other direction, if a curve   defined over $\R$  has a simple $\R$-point, then the implicit function theorem implies that it has infinitely many $\R$-points.   }
\end{remark}

\section*{Acknowledgement}

The authors would like to thank Yuri Bilu and Umberto Zannier for 
their comments.

This work of I.~E.~Shparlinski. was supported  in part by  
the  Australian Research Council  Grants DP170100786 and DP180100201.

\end{document}